\newtheorem{theorem}{Theorem}[section]
\newtheorem{lemma}[theorem]{Lemma}
\begin{document}

\title{ Explicit bounds on exceptional zeroes of Dirichlet $L$-functions}
\author{Matteo Bordignon}
\affil[]{School of Physical, Environmental and Mathematical Sciences}
\affil[]{University of New South Wales Canberra }
\affil{m.bordignon@student.unsw.edu.au}
\date{\vspace{-5ex}}
\maketitle
\begin{abstract}
The aim of this paper is to improve the upper bound for the exceptional zeroes $\beta_0$ of Dirichlet $L$-functions.
We do this by improving on explicit estimates for $L'(\sigma, \chi)$ for $\sigma$ close to unity.
\end{abstract}
\section{Introduction}
The real part of the zeroes of $L(s,\chi):= \sum_{n=0}^{\infty} \chi(n)n^{-s}$ , with $\chi$ a Dirichlet character and $\Re(s)\in(0,1)$, is of high interest. It is deeply related to the size of the remainder term of the Prime Number Theorem for primes in arithmetic progression.\\
Defining $\Pi(s,q):=  \Pi_{\chi \pmod{q} } L(s,~\chi)$ an explicit result is the following, that is Theorem 1.1 of \cite{McCurley}.
\begin{theorem}[McCurley]
With $R_0=9.6459$, the function $\Pi(s, q)$ has at most one zero $\rho=\beta+it$ in the region $\beta \geq 1-(R_0\log \max\lbrace q, q|t|, 10 \rbrace)^{-1}$. This zero, if it exists, must be real and simple and must correspond to a non-principal real  character $\chi\pmod{q} $.
\end{theorem}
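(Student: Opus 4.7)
\medskip
\textbf{Proof proposal.} The plan is to follow the classical argument for zero-free regions of Dirichlet $L$-functions, based on two ingredients: the trigonometric identity $3+4\cos\theta+\cos 2\theta = 2(1+\cos\theta)^{2} \geq 0$, and explicit estimates for $L'(\sigma,\chi)$ near $\sigma=1$. The analysis splits according to whether a hypothetical zero $\rho=\beta+it$ of $\Pi(s,q)$ in the region is real, and whether the associated character $\chi$ is real.

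For all configurations \emph{except} a real zero of a real non-principal character, I would apply the trigonometric identity termwise to the Dirichlet series for $-\Re(L'/L)$ at the points $\sigma,\ \sigma+it,\ \sigma+2it$ with characters $\chi_0,\chi,\chi^{2}$, obtaining
\[
-3\Re\tfrac{L'}{L}(\sigma,\chi_0)-4\Re\tfrac{L'}{L}(\sigma+it,\chi)-\Re\tfrac{L'}{L}(\sigma+2it,\chi^{2}) \geq 0 \quad (\sigma>1).
\]
Via the Hadamard partial fraction expansion of $L'/L$, the left-hand side is bounded above by $\tfrac{3}{\sigma-1}-\tfrac{4}{\sigma-\beta}+C\log\max\{q,q|t|,10\}$, where the pole at $s=1$ of $L(s,\chi_0)$ supplies the first term, setting $t=\gamma$ in the second term (at a zero $\rho=\beta+i\gamma$) supplies $-\tfrac{4}{\sigma-\beta}$, and the remaining contributions (archimedean factors, other zeros, and the $\chi^{2}$ term when $\chi^{2}\neq\chi_0$) are controlled by an explicit $\log$ bound. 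Rearranging and optimizing $\sigma=1+\lambda/\log\max\{q,q|t|,10\}$ then yields the claimed lower bound on $1-\beta$.

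The remaining case---a real zero $\beta_0$ of a real non-principal $\chi$, for which $\chi^{2}=\chi_0$ supplies an extra $\tfrac{1}{\sigma-1}$ pole and the trigonometric approach degenerates---is precisely the exceptional zero. For a single such zero I would use the mean value identity $L(1,\chi)=\int_{\beta_0}^{1}L'(\sigma,\chi)\,d\sigma$ (valid because $L(\beta_0,\chi)=0$) together with an effective lower bound on $|L(1,\chi)|$ and an explicit upper bound on $|L'(\sigma,\chi)|$---the sharpened version of which is the main technical content of the present paper. To upgrade this to \emph{at most one} exceptional zero across $\Pi(s,q)$, and to deduce simplicity, for any two real non-principal characters $\chi_1,\chi_2$ with real zeros $\beta_1,\beta_2$ in the region I would consider the auxiliary function $F(s)=\zeta(s)L(s,\chi_1)L(s,\chi_2)L(s,\chi_1\chi_2)$: its Dirichlet coefficients are non-negative (by the Euler-factor identity $(1+\chi_1(p))(1+\chi_2(p))\geq 0$), forcing $-F'(\sigma)/F(\sigma)\geq 0$ for $\sigma>1$, yet the two zero contributions $-\tfrac{1}{\sigma-\beta_1}-\tfrac{1}{\sigma-\beta_2}$ would overwhelm the $\tfrac{1}{\sigma-1}$ pole from $\zeta$ at $\sigma$ just above $\max(\beta_1,\beta_2)$ unless $\beta_1,\beta_2$ lie outside the region.

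The conceptual structure here is classical; the main obstacle is the quantitative calibration needed to reach the precise constant $R_0=9.6459$, which requires near-optimal bookkeeping across the trigonometric inequality, the archimedean partial-fraction terms, and especially the refined bound on $L'(\sigma,\chi)$ to be established in the later sections.
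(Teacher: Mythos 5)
This statement is not proved in the paper at all: it is quoted verbatim as Theorem 1.1 of McCurley's 1984 article and used as an imported black box, so there is no internal proof to compare your attempt against. Judged on its own terms, your sketch correctly reproduces the classical skeleton that McCurley's argument follows: the $3+4\cos\theta+\cos 2\theta\geq 0$ inequality combined with the Hadamard/partial-fraction expansion of $L'/L$ handles complex zeros and complex characters, and the non-negativity of the Dirichlet coefficients of $\zeta(s)L(s,\chi_1)L(s,\chi_2)L(s,\chi_1\chi_2)$ (Landau's device) rules out two real zeros from real characters, forces simplicity, and restricts the surviving zero to a real non-principal character. That is the right conceptual decomposition.

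Two substantive criticisms. First, for an \emph{explicit} theorem of this kind the entire mathematical content is the constant: getting $R_0=9.6459$ requires explicit versions of every estimate you wave at --- the archimedean $\Gamma$-factor contributions, the sum over nontrivial zeros in the partial-fraction formula, the treatment of $\chi^2$ principal versus non-principal, and a genuine optimization over $\sigma$ and the weights in the trigonometric polynomial (McCurley in fact optimizes over a family of non-negative trigonometric polynomials, not just $3+4\cos\theta+\cos 2\theta$). None of that bookkeeping is present, so what you have is a plan, not a proof. Second, your paragraph invoking the mean-value identity $L(1,\chi)=\int_{\beta_0}^1 L'(\sigma,\chi)\,d\sigma$ together with lower bounds on $L(1,\chi)$ is out of place here: McCurley's theorem does not bound the exceptional zero away from $1$; it merely permits one such zero and describes it. The mean-value argument is the mechanism of Theorems 1.3--1.5 of the \emph{present} paper, and importing it into a proof of McCurley's statement conflates the quoted result with the result the paper is actually trying to establish.
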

This zero will be called the exceptional zero $\beta_0$.\\
Platt \cite{Platt} proved that for a zero to be exceptional $q>4\cdot 10^5$ must hold, checking the Generalised Riemann Hypothesis for primitive characters to height $\max \big(\frac{10^8}{q}, \frac{A\cdot 10^7}{q}\big)$, with $A=7.5$ if the character is even, $A=3.5$ if it is odd. 
Another explicit result, by Kadiri \cite{Kadiri1}, is that for $q \leq 4\cdot10^5$ there are no zeroes in the region $\beta >~1-~(R_0\log \max\lbrace q, q|t| \rbrace)^{-1}$, with $R_1= 5.60$.

For any exceptional zero $\beta_0$ we have $\beta_0 \leq 1 -\frac{\lambda}{  q^{1/2}\log^2 q}$, where $\lambda$ can be calculated explicitly:
\begin{enumerate}
\item Liu and Wang prove $\lambda = \frac{\pi}{0.4923}$ with conductor $q'> 987$ in Theorem 3 from \cite{Liu-Wang},
\item Bennet et al. prove $\lambda = 40$ for $q \geq 3$ in Proposition 1.11 from \cite{Bennett}. 
\end{enumerate}

These results follows from the mean-value theorem, a lower bound for $L(1,\chi)$, obtained using the Class Number Formula, and an upper bound for $\left| L'(\sigma,\chi)\right|$, with $\sigma \in (\beta_0,1)$.\\
The improvement by Bennett et al. is due to better lower and upper bounds, the first obtained using more precise calculations and the second with computational aid. In Lemma 6.5 Bennett et al. obtain $\left| L'(\sigma,\chi)\right| \leqslant 0.27356 \log^2 q$ for $\chi$ primitive, $q\geq 4\cdot10^5$ and $\beta_0 \geqslant 1- \frac{1}{ 4 \sqrt{q}}$, which appears to be the best result in the literature. Thus can be improved using a better P\'{o}lya--Vinogradov inequality \citep{Frolenkov}, but this would lead to at most $\left| L'(\sigma,\chi)\right| \leqslant 0.23 \log^2 q$.
From Theorem \ref{11} we are able to obtain better upper bounds for $\left| L'(\sigma,\chi) \right|$.

\begin{theorem}
\label{40}
Assume $\chi$ is a primitive real character and $\sigma\in (\beta_0, 1)$. With $\chi$ odd, $\beta_0~\geqslant~1-~\frac{800}{ \sqrt{q}\log^2 q}$ and $q>4\cdot~10^5$, the following bound holds 
\begin{equation}
\label{123}
\left| L'(\sigma,\chi)\right| \leqslant 0.18  \log^2 q.
\end{equation} 
With $\chi$ even, $\beta_0~\geqslant ~1-~\frac{515}{ \sqrt{q}\log^2 q}$ and $q>4\cdot 10^5$, the following bound holds 
\begin{equation}
\label{6}
\left| L'(\sigma,\chi)\right|  \leqslant 0.1536  \log^2 q.
\end{equation} 
With $\chi$ even, $\beta_0~\geqslant~ 1-~\frac{80}{ \sqrt{q}\log^2 q}$ and $q>10^7$, the following bound holds 
\begin{equation}
\label{5}
\left| L'(\sigma,\chi)\right|  \leqslant 0.15 \log^2 q.
\end{equation} 
\end{theorem}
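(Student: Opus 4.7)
The plan is to apply Theorem~\ref{11} — the general explicit bound on $|L'(\sigma,\chi)|$ developed earlier in the paper, which presumably uses partial summation together with a Pólya--Vinogradov-type inequality (sharpened via \citep{Frolenkov}) and contains a free cut-off parameter as well as parity-dependent constants — and then tune that parameter optimally in each of the three cases.

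First I would exploit the hypothesis $\sigma > \beta_0 \geq 1 - c/(\sqrt{q}\log^2 q)$ with $c\in\{80,515,800\}$ to show that $\sigma$ is so close to $1$ that every factor of the form $q^{1-\sigma}$ appearing in the bound of Theorem~\ref{11} satisfies $q^{1-\sigma}=\exp((1-\sigma)\log q)\leq 1+O\!\left(1/(\sqrt{q}\log q)\right)$, and therefore contributes only a harmless multiplicative correction. Any $(1-\sigma)^{-1}$ term, meanwhile, is controlled by $\sqrt{q}\log^2 q/c$ and absorbed later against the $\log^2 q$ target. After this reduction, the bound from Theorem~\ref{11} becomes an explicit function of $q$ and the cut-off parameter alone, with the constant depending only on the parity of $\chi$.

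The second step is the case analysis. For $\chi$ odd with $q>4\cdot 10^5$, the odd Pólya--Vinogradov constant is weaker, explaining the larger $0.18$ in \eqref{123}. For $\chi$ even with $q>4\cdot 10^5$, the sharper even constant produces $0.1536$ in \eqref{6}. To sharpen this further to $0.15$ in \eqref{5}, I would use the stronger threshold $q>10^7$ to absorb secondary terms of order $1/\sqrt q$ and $(\log q)/\sqrt q$ into a cleaner leading coefficient. In each regime the cut-off parameter — plausibly of the form $q^{\alpha}$ for some $\alpha\in(0,1)$ — is chosen to minimise the sum of the initial-segment and tail contributions.

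The main obstacle will be the explicit numerical optimisation: in each of the three cases one has to exhibit a value of $\alpha$ and verify, uniformly in $q$ above the stated threshold, that the resulting bound from Theorem~\ref{11} drops below $0.18\log^2 q$, $0.1536\log^2 q$, or $0.15\log^2 q$ respectively. Because several terms with competing dependences on $q$, $\sigma$, and $\alpha$ appear simultaneously, this is almost certainly a computer-assisted minimisation rather than a clean closed-form calculation, and the delicate point will be showing that the worst case (namely $q$ just above the threshold) still satisfies the inequality.
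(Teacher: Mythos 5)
Your outline assembles the right ingredients in spirit (Lemma~\ref{11}, the P\'olya--Vinogradov bound of \cite{Frolenkov}, the observation that $\beta_0\geq 1-c/(\sqrt{q}\log^2 q)$ renders the factor raised to the power $1-\sigma$ essentially equal to $1$, and a numerical check at the threshold), but the central quantitative step is absent and is replaced by structure that does not exist in the argument. Lemma~\ref{11} is not "a general explicit bound on $|L'(\sigma,\chi)|$ with a free cut-off parameter and parity-dependent constants": it is a bare partial-summation statement which, applied with $g=\chi$ and $f(n)=\log n/n^\sigma$, yields
\begin{equation*}
\left|L'(\sigma,\chi)\right|\leqslant \sum_{n=2}^{S_0(\chi)}\frac{\log n}{n^\sigma},
\qquad S_0(\chi):=\max_N\Bigl|\sum_{n\leq N}\chi(n)\Bigr|.
\end{equation*}
A single further partial summation (with a \emph{fixed} auxiliary cut-off $d=100$ whose contribution is negative, hence discarded) gives the bound $S_0(\chi)^{1-\sigma}\cdot\tfrac12\log^2 S_0(\chi)$ as in (\ref{12}); substituting $S_0(\chi)\leq C\sqrt{q}\log(\cdots)$ from \cite{Frolenkov} is what produces a main term of roughly $\tfrac18\log^2 q$ and, at the stated thresholds, the constants $0.18$, $0.1536$ and $0.15$. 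There is no exponent $\alpha$ to optimise and no $(1-\sigma)^{-1}$ term to absorb; positing a computer-assisted minimisation over a cut-off of the form $q^\alpha$ is a wrong picture of the proof (that structure belongs to Lemma~\ref{13} and Theorem~\ref{42}, where a Burgess-type bound genuinely introduces a range $C_1(q)\leq N\leq C_2(q)$).

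You also misattribute the source of the improvement in the even case. The even P\'olya--Vinogradov constant $4/\pi^2$ is actually \emph{larger} than the odd constant $1/\pi$; the gain comes from the identity $S(\chi)=2S_0(\chi)$ for even primitive characters (see \cite{Pomerance}), which lets one insert $S(\chi)/2$ rather than $S(\chi)$ into (\ref{12}) and replaces $4/\pi^2$ by $2/\pi^2<1/\pi$ inside the logarithm. Without that step you would not reach $0.1536$ in (\ref{6}) or $0.15$ in (\ref{5}). As written, your proposal could not be executed to produce the three stated constants: you would need to first derive the explicit inequality (\ref{12}) and only then perform the (entirely routine, non-optimised) numerical evaluation at $q=4\cdot 10^5$ and $q=10^7$.
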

 These results need to be used together with the lower value for $q$ by Platt \cite{Platt} and the result in section A.10 from \cite{Bennett}, to get the following upper bounds for $\beta_0$.
\begin{theorem}
\label{41}
Assume $\chi$ is a non-principal real character. 
With $\chi$ odd and $ q > 4\cdot 10^5 $ , the following bound holds
\begin{equation}
\label{81}
\beta_0 \leq 1-\frac{800}{ \sqrt{q}\log^2 q}. 
\end{equation}
With $\chi$ even and $  4\cdot 10^5 < q \leq 10^7 $, the following bound holds
\begin{equation}
\label{82}
\beta_0 \leq 1-\frac{515}{ \sqrt{q}\log^2 q}. 
\end{equation}
With $\chi$ even and $ q > 10^7 $, the following bound holds
\begin{equation}
\label{83}
\beta_0 \leq 1-\frac{80}{ \sqrt{q}\log^2 q}. 
\end{equation}
\end{theorem}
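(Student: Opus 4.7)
The plan is a proof by contradiction that combines Theorem \ref{40} with the classical mean-value argument and the explicit lower bounds for $L(1,\chi)$ recorded in section A.10 of \cite{Bennett}. By Platt's computation \cite{Platt}, we may assume throughout that $q>4\cdot 10^5$, so that Theorem \ref{40} is applicable; in each of the three cases I would assume for contradiction that $\beta_0 > 1 - \frac{C}{\sqrt{q}\log^2 q}$, where $C\in\{800,\,515,\,80\}$ matches the relevant part of Theorem \ref{40}.

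Under this assumption the hypothesis of Theorem \ref{40} is satisfied, so $|L'(\sigma,\chi)|\leqslant K\log^2 q$ holds on $(\beta_0,1)$ with $K\in\{0.18,\,0.1536,\,0.15\}$ respectively. Applying the mean-value theorem to the real-valued function $\sigma\mapsto L(\sigma,\chi)$ on $[\beta_0,1]$ and using $L(\beta_0,\chi)=0$, one obtains
\[
L(1,\chi)=(1-\beta_0)\,L'(\sigma_0,\chi)
\]
for some $\sigma_0\in(\beta_0,1)$, whence
\[
L(1,\chi)< \frac{C K}{\sqrt{q}},
\]
i.e.\ an upper bound with numerical constant $144$, $\approx 79.1$, and $12$ respectively. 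Section A.10 of \cite{Bennett}, via the Class Number Formula, furnishes matching lower bounds of the shape $L(1,\chi)\geqslant c/\sqrt{q}$, with separate constants $c$ for odd characters, for even characters with $q\leqslant 10^7$, and for even characters with $q>10^7$. The constants $800$, $515$, $80$ in Theorem \ref{40} have been chosen precisely so that $c>CK$ in each case, yielding the desired contradiction and hence \eqref{81}--\eqref{83}.

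The main difficulty is calibration rather than analysis: one must verify that the $q$-ranges and parity conditions in Theorem \ref{40} line up with those in section A.10 of \cite{Bennett}, and that the three constants produced by the Class Number Formula strictly exceed $144$, $79.1$, and $12$. In particular the split at $q=10^7$ in the even case reflects exactly the threshold at which the lower bound for $L(1,\chi)$ strengthens enough to tolerate the smaller constant $80$ in place of $515$. Once this bookkeeping is done, no further analytic input beyond Theorem \ref{40} is required; the proof reduces to a one-line comparison of an upper and a lower bound for $L(1,\chi)$.
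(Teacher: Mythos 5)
Your core argument is exactly the paper's: combine the mean-value identity $1-\beta_0 = L(1,\chi)/|L'(\sigma,\chi)|$ with the upper bounds of Theorem \ref{40} and the class-number lower bounds $L(1,\chi)\geq 46\pi/\sqrt{q}$ (odd), $>79.2177/\sqrt{q}$ (even, $q\leq 10^7$) and $\geq 12/\sqrt{q}$ (even, $q>10^7$), and check that these beat $800\cdot 0.18=144$, $515\cdot 0.1536=79.104$ and $80\cdot 0.15=12$. The calibration is right (in the third case the two constants are \emph{equal}, not one strictly exceeding the other, but the strict inequality $h(\sqrt{d})\log\eta_d>12$ from \cite{Bennett} still gives the contradiction), and the contradiction framing is a legitimate way to discharge the conditional hypothesis $\beta_0\geq 1-c/(\sqrt{q}\log^2 q)$ in Theorem \ref{40}.

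There is, however, one genuine gap: Theorem \ref{41} is stated for an arbitrary non-principal real character $\chi \bmod q$, whereas Theorem \ref{40} and the class-number lower bounds (which rest on writing $\chi$ as a Kronecker symbol) apply only to \emph{primitive} characters. You apply Theorem \ref{40} directly to $\chi$ without justifying this. The missing step is the reduction to the inducing primitive character $\chi' \bmod q'$: the zeros of $L(s,\chi)$ in the critical strip are those of $L(s,\chi')$, so the primitive case gives $\beta_0\leq 1-\lambda/(\sqrt{q'}\log^2 q')\leq 1-\lambda/(\sqrt{q}\log^2 q)$ by monotonicity; and when $q'\leq 4\cdot 10^5$ one needs the computations of Platt \cite{Platt} (and Kadiri \cite{Kadiri1}) to rule out an exceptional zero altogether. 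This is also where Platt's result actually enters the proof --- your invocation of it merely to ``assume $q>4\cdot 10^5$'' is redundant, since that is already a hypothesis of the theorem, and it misses the case of an imprimitive $\chi$ of large modulus but small conductor, which your argument as written does not cover.
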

One last result is obtained using Theorem \ref{11} together with the explicit version of the Burgess bound from \citep{Trevino}.
\begin{theorem}
\label{42}
With $\chi$ a primitive real character modulo $p$ prime, $\beta_0 \geqslant 1-\frac{c}{ \sqrt{p}\log^2 p}$ and $\sigma \in (\beta_0, 1)$, we have
\begin{equation*}
\left|  L'(\sigma, \chi) \right|\leq \left( \frac{1}{32} + o(1)\right) \log^2 p,
\end{equation*}
with the reminder term $o(1)$ explicit.
\end{theorem}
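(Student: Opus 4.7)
The plan is to apply Theorem~\ref{11} with Treviño's explicit Burgess bound in place of the P\'olya--Vinogradov inequality. The underlying idea for why the constant $\tfrac{1}{32}$ appears is the following: after partial summation,
\[
L'(\sigma,\chi)\;=\;\int_{1}^{\infty}S(x)\,\frac{\sigma\log x-1}{x^{\sigma+1}}\,dx,\qquad S(x):=\sum_{n\le x}\chi(n),
\]
and the dominant contribution comes from the range $x\le p^{1/4}$, where only the trivial bound $|S(x)|\le x$ is available. On this range,
\[
\int_{1}^{p^{1/4}}\frac{\sigma\log x-1}{x^{\sigma}}\,dx\;=\;\tfrac{1}{2}\bigl(\tfrac{1}{4}\log p\bigr)^{2}+O(\log p)\;=\;\tfrac{1}{32}\log^{2}p+o(\log^{2}p)
\]
as $\sigma\to 1^{-}$. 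The squared cutoff $\bigl(\tfrac{1}{4}\log p\bigr)^{2}$ is precisely what produces the factor $\tfrac{1}{32}$.

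For the tail $x>p^{1/4}$, I would invoke Treviño's explicit version of the Burgess bound \cite{Trevino}: for $\chi$ primitive modulo the prime $p$ and any integer $r\ge 2$,
\[
|S(x)|\;\le\;C(r)\,x^{1-1/r}\,p^{(r+1)/(4r^{2})}(\log p)^{1/(2r)},
\]
with $C(r)$ explicit. Using periodicity together with $\sum_{n=1}^{p}\chi(n)=0$, this extends to all $x\ge p^{1/4}$ in the form $|S(x)|\le C(r)\,p^{1/4+1/(4r^{2})}(\log p)^{1/(2r)}$, so the tail integral is dominated by
\[
p^{1/4+o(1)}\int_{p^{1/4}}^{\infty}\frac{\log x}{x^{\sigma+1}}\,dx\;\ll\;p^{o(1)}\log p
\]
provided $r$ is allowed to grow slowly with $p$ (for instance $r\asymp\log\log p$). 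This is absorbed into the explicit $o(1)\log^{2}p$ remainder.

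The main obstacle is bookkeeping rather than anything conceptual. I would have to (i) set up Theorem~\ref{11} with a free cutoff $M$ near $p^{1/4}$ and optimise it against $r$, (ii) carry the explicit shape of $C(r)$ and the $r$-dependent factors $p^{1/(4r^{2})}$ and $(\log p)^{1/(2r)}$ through the estimate so that the remainder is genuinely explicit, and (iii) use the hypothesis $\beta_{0}\ge 1-c/(\sqrt{p}\log^{2}p)$ to bound the factors $\sigma^{-k}$ arising from integration, while handling the small-$x$ region $x<e^{1/\sigma}$ (where $\sigma\log x-1$ changes sign) by an $O(1)$ estimate. The restriction to prime modulus $p$ enters only because Treviño's form of the Burgess bound is stated in that case; everything else is uniform.
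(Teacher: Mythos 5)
Your overall strategy (trivial bound on an initial segment, a Burgess-type bound beyond it, partial summation against $-f'$) is the right one and is essentially the paper's, but the quantitative heart of your argument fails at the choice of cutoff. Trevi\~no's bound $|S(N)|\le c_1(r)N^{1-1/r}p^{(r+1)/(4r^2)}\log^{1/r}p$ does not beat the trivial bound $|S(N)|\le N$ until $N\gtrsim c_1(r)^r p^{(r+1)/(4r)}\log p$, i.e.\ until $N$ is of size roughly $p^{1/4+1/(4r)}$, not $p^{1/4}$. Hence the region where only the trivial bound is available contributes $\tfrac12\log^2\bigl(p^{(r+1)/(4r)}\bigr)=\tfrac{(r+1)^2}{32r^2}\log^2p$, and the constant $\tfrac1{32}$ is attained only in the limit $r\to\infty$. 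That is exactly the paper's route: applying Lemma \ref{13} (not Lemma \ref{11} --- Lemma \ref{13} is precisely the version with the Burgess-shaped hypothesis and the crossover points $C_1(q),C_2(q)$) it first proves $|L'(\sigma,\chi)|\le\bigl(\tfrac{(r+1)^2}{32r^2}+o(1)\bigr)\log^2p$ and then lets $r$ grow, using that $c_1(r)$ is decreasing. If instead you cut at $p^{1/4}$ and apply Burgess on $[p^{1/4},\cdot)$, the contribution near the lower endpoint is of order $p^{1/(4r^2)}\log^{1+1/r}p$ (up to constants depending on $r$), which for fixed $r$ is a positive power of $p$ and swamps $\log^2p$; taking $r\asymp\log\log p$ does not repair this, since $p^{1/(4r^2)}$ is then still super-polynomial in $\log p$.

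A second, independent error is the asserted uniform tail bound. Periodicity and $\sum_{n=1}^{p}\chi(n)=0$ reduce $S(x)$ to a sum of length at most $p$, but they do not give $|S(x)|\le C(r)\,p^{1/4+1/(4r^2)}(\log p)^{1/(2r)}$ for all $x\ge p^{1/4}$: the Burgess bound grows like $x^{1-1/r}$, and at $x\asymp\sqrt p$ the best available estimate (the minimum of Burgess and P\'olya--Vinogradov) is of size about $p^{1/2-1/(4r)}$, nowhere near $p^{1/4+o(1)}$. This forces the three-region decomposition encoded in Lemma \ref{13}: the trivial bound up to $C_1(p)\asymp c_1(r)^rp^{(r+1)/(4r)}\log p$, Burgess on $[C_1,C_2]$ where $C_2$ is the point at which Burgess crosses the P\'olya--Vinogradov bound $S_0(\chi)\le c_0\sqrt p\log p$, and the constant bound $S_0(\chi)$ beyond $C_2$; the last region contributes only $S_0(\chi)f(C_2)=O(\log^2 p\cdot p^{-\delta(r)})$ and the middle region $O_r(p^{-\delta(r)}\log^2 p)$ for some $\delta(r)>0$. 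Your remaining bookkeeping points --- in particular using $\beta_0\ge1-c/(\sqrt p\log^2p)$ so that $x^{1-\sigma}=1+o(1)$ throughout the relevant range --- are fine and match the paper.
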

In Section [\ref{01}] we prove Theorems \ref{11} and \ref{13} that will allow to improve the bound on $\left|  L'(\sigma, \chi) \right|$. In Section [\ref{02}] we use  Theorem \ref{11} to prove Theorem \ref{40} and Theorem \ref{41}. In Section [\ref{03}] we apply Theorem \ref{13} to prove Theorem~\ref{42}.
\section{Preliminary results}
\label{01}
We now prove two general results that will later be applied to primitive real characters. These theorems will be used to improve the upper bound for $\left|  L'(\sigma, \chi) \right|$.
\begin{lemma}
\label{11}
Let $g(n)$ be such that for all $n$ we have $g(n)=\{-1, 0, 1\}$. 
We further assume that there is a $ M(q) \in \Re$ such~that
\begin{equation*}
 \max_k \left|\sum_{n=0}^k g(n)\right| \leq M(q).
\end{equation*}
Let $f: \Re  \rightarrow \Re $ such that $f \geq 0$, $f \rightarrow 0$, $f\in \mathcal{C'}$, $f'(x) <0$ and $\left| f'\right|  \searrow$ such~that
\begin{equation*}
\int_0^{\infty} \left| f'(x) \right|  dx < \infty .
\end{equation*}
Then we have
\begin{equation*}
\left|\sum_{n=0}^{\infty} g(n)f(n)\right|\leq  \sum_{n=0}^{\lfloor M(q)\rfloor} f(n).
\end{equation*}

\end{lemma}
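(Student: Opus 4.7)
The natural tool here is Abel summation. Setting $S(k) := \sum_{n=0}^{k} g(n)$, the hypothesis gives $|S(k)| \leq M(q)$ for every $k$, and the pointwise bound $g(n) \in \{-1,0,1\}$ gives additionally the ramp bound $|S(k)| \leq k+1$. Applying summation by parts to the partial sums yields
\begin{equation*}
\sum_{n=0}^{N} g(n) f(n) = S(N) f(N) + \sum_{n=0}^{N-1} S(n) \bigl( f(n) - f(n+1) \bigr).
\end{equation*}
The boundary term vanishes as $N \to \infty$ because $|S(N)| \leq M(q)$ and $f(N) \to 0$, while each increment $f(n) - f(n+1)$ is nonnegative since $f' < 0$. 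Taking absolute values and inserting the two-sided bound on $|S(n)|$ therefore gives
\begin{equation*}
\left|\sum_{n=0}^{\infty} g(n) f(n)\right| \leq \sum_{n=0}^{\infty} \min(n+1,\, M(q)) \bigl( f(n) - f(n+1) \bigr).
\end{equation*}

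The remaining task is to evaluate the right-hand side. Set $K := \lfloor M(q) \rfloor$ and split the sum at $n = K$. On the tail $n \geq K$ the coefficient is the constant $M(q)$, so the telescoping series collapses to $M(q) f(K)$ thanks to $f \to 0$. On the head $0 \leq n \leq K-1$ the coefficient is $n+1$, and a short Abel-summation-in-reverse (regrouping by $f(n)$) evaluates that piece to $\sum_{n=0}^{K-1} f(n) - K f(K)$. Summing the two contributions produces
\begin{equation*}
\sum_{n=0}^{K-1} f(n) + (M(q) - K) f(K) \;\leq\; \sum_{n=0}^{K} f(n) \;=\; \sum_{n=0}^{\lfloor M(q) \rfloor} f(n),
\end{equation*}
where the inequality uses $0 \leq M(q) - K < 1$ together with $f(K) \geq 0$.

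There is no genuinely hard step; the whole argument is elementary summation by parts. The one conceptual point worth highlighting is that the trivial bound $|S(n)| \leq n+1$ coming from $|g(n)| \leq 1$ is essential: combined with the given bound $|S(n)| \leq M(q)$ it produces the envelope $\min(n+1, M(q))$, and it is precisely this envelope that makes the right-hand side collapse to a sum of exactly $\lfloor M(q) \rfloor + 1$ values of $f$ rather than to the much weaker $M(q) f(0)$. Note that the regularity hypotheses on $f$ are in fact stronger than what the proof uses: only the fact that $f$ is nonnegative, decreasing, and tends to $0$ enters, guaranteeing both the vanishing of the boundary term and the telescoping of the tail.
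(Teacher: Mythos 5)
Your proof is correct and takes essentially the same route as the paper: partial summation combined with the observation that the partial sums of $g$ are trapped under the envelope $\min(n+1, M(q))$, which is exactly the partial-sum profile of the extremal $g$ the paper exhibits. Your discrete Abel-summation version is in fact more complete, since it rigorously justifies the step the paper dismisses as ``easy to see'' (that the maximizing configuration is $g(n)=1$ up to $\lfloor M(q)\rfloor$ and $0$ afterwards) and carries out the telescoping evaluation explicitly.
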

\begin{proof}
Using the partial summation formula, with $f \rightarrow 0$ and the bound on $\left|\sum_{n=0}^k g(n)\right| $, we obtain
\begin{equation*}
\left|\sum_{n=0}^{\infty} g(n)f(n)\right|= \left|\int_{0}^{\infty} \left( \sum_{n \leq x}g(n) \right) (-f'(x))~ dx\right|,
\end{equation*}
with $-f'(n) > 0$.
Given that $-f'(n) > 0$ we want $\sum_{n \leq x}g(n)$ that would maximize all other possible choices and it is easy to see that this is obtained by the function $g(n)$ defined as follows 
\begin{equation*}
 g(n)=\left\{
                \begin{array}{ll}
                  1 ~\text{when}~ n \leq  \lfloor  M(q) \rfloor \\
                 0~ \text{while}~ n > \lfloor  M(q)\rfloor  .
                \end{array}
              \right.
\end{equation*}



Note that also $-g(n)$ is a maximizing function.
The result follows easily from this choice of $g(n)$.
\end{proof}

Now we give a version of Theorem \ref{11} adding an upper bound depending on $N$ that is shaped after Burgess' bound.
\begin{lemma}
\label{13}
Adding the hypothesis
\begin{equation}
\label{20}
 \left|\sum_{n=0}^{N} g(n)\right| \leq V(N)
\end{equation}
to the hypotheses in Theorem \ref{11}, and if $V(N)\leqslant \min \{N, V(N)\}$ holds true when
\begin{equation*}
C_1(q)\leqslant N\leqslant C_2(q),
\end{equation*}
with $C_1(q), C_2(q) \in \mathbb{N}$, then $\left|\sum_{n=0}^{\infty} g(n)f(n)\right|$ has as an upper bound 
\begin{equation}
\label{100}
\sum_{n=0}^{ C_1(q)} f(n)+M(q) f(C_2(q))-C_1(q) f(C_1(q))-\int_{C_1(q)}^{C_2(q)} V(x) f'(x) dx .
\end{equation}

\end{lemma}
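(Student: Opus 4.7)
My plan is to repeat the partial summation step used in the proof of Lemma \ref{11}, writing
\[
\left|\sum_{n=0}^{\infty} g(n) f(n)\right| = \left|\int_0^\infty S(x)\bigl(-f'(x)\bigr)\,dx\right|, \qquad S(x) := \sum_{n \leq x} g(n),
\]
and then, since $-f'(x) > 0$, bounding $|S(x)|$ pointwise by the best of the three available estimates: the trivial $|S(x)| \leq \lfloor x \rfloor + 1$ coming from $|g(n)| \leq 1$, the Burgess-type $|S(x)| \leq V(x)$ from \eqref{20}, and the global $|S(x)| \leq M(q)$. By the hypothesis $V(N) \leq \min\{N, M(q)\}$ for $C_1(q) \leq N \leq C_2(q)$, the trivial bound dominates on $[0, C_1(q)]$, the Burgess bound on $[C_1(q), C_2(q)]$, and the global one on $[C_2(q), \infty)$.

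I would then split the integral at $C_1(q)$ and $C_2(q)$ and treat each piece separately. On $[C_2(q), \infty)$ the constant $M(q)$ integrates against $-f'$ to $M(q) f(C_2(q))$, using $f \to 0$. On $[C_1(q), C_2(q)]$, inserting $V(x)$ directly gives $-\int_{C_1(q)}^{C_2(q)} V(x) f'(x)\,dx$. For the initial piece, the bound $\lfloor x \rfloor + 1$ combined with a short telescoping computation yields
\[
\int_0^{C_1(q)} (\lfloor x \rfloor + 1)\bigl(-f'(x)\bigr)\,dx = \sum_{k=0}^{C_1(q)-1} f(k) - C_1(q) f(C_1(q)) \leq \sum_{n=0}^{C_1(q)} f(n) - C_1(q) f(C_1(q)),
\]
and summing the three contributions reproduces exactly \eqref{100}.

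The main subtlety I expect is that $V$ is only assumed to control $S$ at integer arguments, so pointwise one really has $|S(x)| \leq V(\lfloor x \rfloor)$ rather than $V(x)$ on the middle range; provided $V$ is non-decreasing (which is the natural setting for the intended Burgess application) this is harmless and the at-most-unit discrepancy can either be absorbed into the stated bound or, more carefully, one rewrites the middle contribution as $\int V(\lfloor x \rfloor)(-f'(x))\,dx$ and compares. Apart from this and the routine off-by-one bookkeeping in the first region, everything is a direct piecewise extension of the argument used in Lemma \ref{11}.
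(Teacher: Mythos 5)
Your proof is correct and is essentially the argument the paper intends: partial summation to $\int_0^\infty S(x)(-f'(x))\,dx$ followed by the piecewise-optimal bound on $|S(x)|$ ($\lfloor x\rfloor+1$ up to $C_1(q)$, $V$ in the middle range, $M(q)$ beyond $C_2(q)$), which after the telescoping computation yields exactly (\ref{100}); the paper merely phrases this as choosing a ``maximizing function'' $g$ and sketches the rest. Your caveat that one really has $|S(x)|\leq V(\lfloor x\rfloor)$, so that $V$ should be non-decreasing, is a genuine point the paper glosses over, and it is satisfied in the intended Burgess application.
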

\begin{proof}
The proof is similar to that of Theorem \ref{11} but with the maximizing function $g(n)$ that assumes the value zero enough times to make condition (\ref{20}) hold.\\
Now the result follows applying partial summation to $\left| \sum_{n=0}^{ \left\lfloor  M(q) \right\rfloor } g(n) f(n) \right|$.
\end{proof}
Note that the previous results can surely be further modified adding different conditions.
\section{Upper bound for the exceptional zero $\beta_0$ }
\label{02}
A standard way to get an upper bound on the exceptional zero is to use the mean value theorem to obtain
\begin{equation*}
L(1,\chi)= L(1,\chi)-L(\beta_0,\chi)=L'(\sigma,\chi)(1-\beta_0) ~\text{for~some }~ \sigma \in (\beta_0,1),
\end{equation*}
where
\begin{equation*}
L'(\sigma,\chi)= \frac{d}{d\sigma}L(\sigma,\chi)=-\sum_{n=1}^{\infty} \chi(n) \frac{\log n}{n^{\sigma}} ~\text{with }~ \sigma > 0.
\end{equation*}
Now we see that
\begin{equation}
\label{2}
1-\beta_0=\frac{L(1,\chi)}{| L'(\sigma,\chi)|}
\end{equation}
and thus we are left to obtain a lower bound for $L(1,\chi)$ and an upper bound for $| L'(\sigma,\chi) |$ when $\sigma \in (\beta_0,1)$.
\subsection{Lower bound for $L(1,\chi)$}
We start fixing $q > 4\cdot 10^5$.
We use that every real primitive character can be expressed using the Kroneker symbol, as $\chi(n)= (\frac{d}{n})$, with $q= \left| d\right| $.
First we consider $d < 0$, and thus $ d < -4\cdot10^5$. Dirichlet's class number formula then gives
\begin{equation*}
L(1,\chi)=\frac{2\pi h(\sqrt{d})}{ \omega_d \sqrt{| d |}},\quad \text{with}~ \chi(-1)=-1,
\end{equation*}
with $h(\sqrt{d})$ the class number of $ \mathbb{Q}(\sqrt{d})$ and $\omega_d$ is the number of roots of unity in $ \mathbb{Q}(\sqrt{d})$, which is known to be 2 when $d< -3$. From Table 4 in Watkins \cite{Watkins}, and $ | d | > 319867$ we obtain that $h(\sqrt{-q} )\geq 46$, that gives
\begin{equation}
\label{4}
L(1,\chi)=\frac{2\pi h(\sqrt{d})}{ \omega_d \sqrt{| d |}} \geq \frac{46 \pi}{\sqrt{q}},\quad\text{with}~ \chi(-1)=-1.
\end{equation}
Now we consider $d>0$. Dirichlet's class number formula gives
\begin{equation}
\label{77}
L(1,\chi)=\frac{ h(\sqrt{d} )\log \eta_d}{  \sqrt{ d }},\quad \text{with}~ \chi(-1)=1,
\end{equation}
where $\eta_d=(v_0+u_0\sqrt{d})/2$ , with $v_0$ and $u_0$ the minimal positive integers satisfying $v_0^2-du_0^2=4$.\\
From A.10 in \cite{Bennett} we have that 
\begin{equation}
\label{7}
h(\sqrt{d} )\log \eta_d > 79.2177
\end{equation} when $4\cdot10^5\leqslant d \leqslant 10^7$. Bennett et al. then prove that $h(\sqrt{d} )\log \eta_d >12$ for $ q > 10^7$, thus obtaining from (\ref{77}) that for all $d > 4\cdot10^5$
\begin{equation}
\label{8}
L(1,\chi)\geq \frac{12}{ \sqrt{q}},\quad \text{with}~ \chi(-1)=1.
\end{equation}

\subsection{Upper bound for $\left| L'(\sigma,\chi) \right|$ and proof of Theorem \ref{41}}
Here we assume $\sigma \in (\beta_0, 1)$ and $\beta_0 \geqslant 1-\frac{c}{ \sqrt{q}\log^2 q}$, with $c$ a positive constant to be chosen later.
Then we define
\begin{equation*}
S_0(\chi):=\max_{N} \left| \sum_{n=1}^N \chi (n) \right|, \quad S(\chi):=\max_{N, M} \left| \sum_{n=M+1}^{M+N} \chi (n) \right|.
\end{equation*}

Note that, taking $M=0$, we have $S_0(\chi) \leqslant S(\chi)$. Then we always have, from the triangle inequality, that $S(\chi) \leqslant 2S_0(\chi)$ and, when the character is even, $S(\chi) = 2S_0(\chi)$ see \cite[p.~533]{Pomerance}.
Now we apply Theorem \ref{11} with a primitive real character as $g(n)$ and $f(n)=\frac{\log n}{n^\sigma}$, but with the sum starting from $n=2$, and we get
\begin{equation}\
\label{1}
\left| L'(\sigma,\chi)\right| \leqslant 
 \sum_{n=2}^{S_0(\chi)} \frac{\log n}{n^\sigma}.
\end{equation}
 With $d < S_0(\chi)$, we further obtain by partial summation
 \begin{equation}
\label{12}
\left| L'(\sigma,\chi)\right| \leqslant S_0(\chi)^{1-\sigma}
\left( \frac{1}{2}\log^2 S_0(\chi) - \frac{1}{2}\log^2 d +\sum_{n=2}^{d} \frac{\log n}{n}\right),
\end{equation}
and fixing $d= 100$ 
 \begin{equation*}
 - \frac{1}{2}\log^2 d +\sum_{n=2}^{d} \frac{\log n}{n}< 0.
\end{equation*}
This number we thus obtain is so small that we omit it in what follows.\\
We focus first on odd characters. 
Here we use the following bound, from \cite[p.~278]{Frolenkov}, for a primitive character
\begin{equation*}
S(\chi)\leq \frac{\sqrt{q}}{\pi}\left( \log(\pi \sqrt{q}+10.15)+1.4326 \right).
\end{equation*}
Plugging this bound in (\ref{12}) and using it as an upper bound for $S_0(\chi)$, we obtain
\begin{equation*}\
\left| L'(\sigma,\chi)\right| \leqslant S_0(\chi)^{1-\sigma} \left( \frac{1}{2}\log^2\left( \frac{\sqrt{q}}{\pi}\left( \log(\pi \sqrt{q}+10.15)+1.4326 \right)\right)\right)\cdot
\end{equation*}

Remembering $\beta_0 \geqslant 1-\frac{c}{ \sqrt{q}\log^2 q}$, choosing $c=800$ and with $q >~4\cdot 10^5$, we obtain
\begin{equation}
\label{3}
\left| L'(\sigma,\chi)\right| \leqslant 0.18 \log^2 q.
\end{equation} 
Now we focus on even characters.
The bound from \cite[p.~278]{Frolenkov} is
\begin{equation*}
S(\chi)\leq \frac{4}{\pi^2}\sqrt{q}\left( \log(\frac{\pi^2}{4} \sqrt{q}+10.15)+1.4326 \right).
\end{equation*}
Remembering that for even characters we have $S(\chi) = 2S_0(\chi)$, we plug $\frac{S(\chi)}{2}$ in (\ref{12}) to obtain
\begin{equation*}
\label{101}
\left| L'(\sigma,\chi)\right| \leqslant S_0(\chi)^{1-\sigma}\left( \frac{1}{2}\log^2\left( \frac{2}{\pi^2}\sqrt{q}\left( \log(\frac{\pi^2}{4} \sqrt{q}+10.15)+1.4326 \right)\right)\right)\cdot
\end{equation*}

Remembering $\beta_0 \geqslant 1-\frac{c}{ \sqrt{q}\log^2 q}$  we obtain, with $q~>~4\cdot~10^5$ and $c=515$, 
\begin{equation*}
\left| L'(\sigma,\chi)\right| \leqslant 0.1536  \log^2 q
\end{equation*} 
and, with $q >  10^7$ and $c=80$,
\begin{equation*}
\left| L'(\sigma,\chi)\right| \leqslant 0.15 \log^2 q.
\end{equation*} 
It is interesting to note, aiming to improve the above result, that we have  
\begin{equation}
\label{8}
\left| L'(\sigma,\chi)\right| \leqslant \left (\frac{1}{8}+o(1)\right) \log^2 q.
\end{equation} 
Now Theorem \ref{41} follows easily. We just need to prove the theorem for primitive real characters, indeed if $\chi \pmod{q}$ is induced by some primitive real character $\chi' \pmod{q'}$, then the primitive case yields
\begin{equation*}
\beta_0 \leq 1-\frac{\lambda}{\sqrt{q'}\log^2 q'}\leq 1-\frac{\lambda}{\sqrt{q}\log^2 q}.
\end{equation*}
Thus (\ref{81}) follows from (\ref{2}), (\ref{123}) and (\ref{4}); (\ref{82}) from (\ref{2}), (\ref{6}), (\ref{77}) and (\ref{7}); and (\ref{83}) from (\ref{2}), (\ref{5}) and (\ref{8}).
\section{Another upper bound for the exceptional zero }
\label{03}
We now prove Theorem \ref{42} applying Theorem \ref{13} to a real primitive character modulo $p$ prime.\\
The bound (\ref{20}) will be the explicit Burgess bound from Theorem 1.4. in \citep{Trevino} that states that, for any $p\geqslant 10^7$ and $c_1(r)$ from Table 1 \citep[p.~1655]{Trevino} , we have
\begin{equation*}
\left|\sum_{n=M}^{M+N} \chi(n)\right| \leq c_1(r) N^{1-\frac{1}{r}} p^{\frac{r+1}{4r^2}} \log^{\frac{1}{r}}p.
\end{equation*}
We have $c_1(r) N^{1-\frac{1}{r}} p^{\frac{r+1}{4r^2}} \log^{\frac{1}{r}}p\leq \min \{ N, S_0\left( \chi \right) \}$ when
\begin{equation*}
c_1(r)^{r} p^{\frac{(r+1)}{4r}} \log p \leq N \leq \left(\frac{c_0}{c_1(r)}\right)^{1+\frac{1}{r}} p^{\frac{2r^2-r-1}{2r(r-1)}} \log p,
\end{equation*}
where $c_0$ is such that $ S_0\left( \chi \right) \leq c_0 p^{\frac{1}{2}} \log p$.
Note that it is possible to improve the result using the explicit P\'{o}lya--Vinogradov inequality from \cite{Frolenkov}, but the result would not improve considerably.\\
Using these explicit bounds from Theorem \ref{13}, and $\beta_0 \geqslant~1-~\frac{c}{ \sqrt{p}\log^2 p}$, we obtain
\begin{equation*}
\left| L'(1, \chi) \right| \leq \left( \frac{(r+1)^2}{32r^2} + o(1)\right) \log^2 p,
\end{equation*}
with $o(1)$ explicit and easy to compute. This bound is asymptotically stronger than (\ref{8}), but results worst for small $p$ due to the size of the constant $c_1(r)$ in the explicit version of Burgess bound.\\
Theorem \ref{42} follows taking $r$ sufficiently large and noting that $c_1(r)$ is decreasing.

\section*{Acknowledgements} 
I would like to thank my supervisor Tim Trudgian for his kind help and his sharp suggestions in developing this paper and an anonymous reviewer for the useful comments.


\begin{thebibliography}{9}

\bibitem{Bennett}
M.~A Bennett, G. Martin, K. O'Bryant,  and A. Rechnitzer. 
\textit{Explicit bounds for primes in arithmetic progressions.}
ArXiv e-prints, Nov. 2018.
arxiv:1802.00085.
To appear in Illinois J. Math.

\bibitem {Frolenkov}
D. A. Frolenkov and K. Soundararajan.
\textit{A generalization of the {P}\'olya-{V}inogradov inequality.}
Ramanujan J., 31(3):271--279, 2013.
  
\bibitem{McCurley}
K.J. McCurley
\textit{Explicit zero-free regions for the Dirichlet L-functions}
J. Number Theory, 19 (1) (1984), pp. 7-32
   
\bibitem{Kadiri1}
H. Kadiri.
\textit{Explicit zero-free regions for the {D}irichlet {$L$}-functions.}
Mathematika, 64(2):445--474, 2018.
  
\bibitem{Liu-Wang}
M.-C. Liu and T. Wang.
\textit{Distribution of zeros of {D}irichlet {$L$}-functions and an
              explicit formula for {$\psi(t,\chi)$}.}
Acta Arith., 102(3):261--293, 2002.
  
\bibitem{Platt}
D. J. Platt.
\textit{Numerical computations concerning the {GRH}.}
Math. Comp., 85(302):3009--3027, 2016.
  
\bibitem{Pomerance}
C. Pomerance.
\textit{Remarks on the {P}\'olya-{V}inogradov inequality.}
Integers, 11(4):531--542, 2011.
 
\bibitem{Trevino}
E. Trevi\~no.
\textit{The {B}urgess inequality and the least {$k$}th power
              non-residue.}
Int. J. Number Theory, 11(5):1653--1678, 2015.
 
\bibitem{Watkins}
M. Watkins. 
\textit{Class numbers of imaginary quadratic fields.}
Math. Comp., 73(246):907--938, 2004.
 
\end{thebibliography}
\end{document}